\definecolor{lightblue}{RGB}{240,240,255}
\theoremstyle{plain} %% This is the default
\numberwithin{equation}{section}
\newtheorem{thm}[equation]{Theorem}
\newtheorem*{thm*}{Theorem}
\newtheorem{lemma}[equation]{Lemma}
\newtheorem{prop}[equation]{Proposition}
\theoremstyle{definition}
\theoremstyle{remark}
\newtheorem*{remark*}{Remark}
\newtheorem{remark}[equation]{Remark}
\DeclareMathOperator{\Ann}{Ann}
\newcommand{\xrightarrowdbl}[2][]{%
  \xrightarrow[#1]{#2}\mathrel{\mkern-14mu}\rightarrow
}
\title[A Frobenius-Schreier-Sims Algorithm]{A Frobenius-Schreier-Sims Algorithm 
to tensor decompose algebras}
\author{Ian Holm Kessler}
\address{ 
519 S Meldrum St
Apt 313
Fort Collins, CO 80521
} 
\email{ianholm9@gmail.com}
\author{Henry Kvinge}
\address{ 
	Department of Mathematics\\ 
	Colorado State University\\ 
	Fort Collins, CO 80523 
} 
\email{Henry.Kvinge@ColoState.Edu}
\author{James B. Wilson}
\address{
	Department of Mathematics\\
	Colorado State University\\
	Fort Collins, CO 80523
}
\email{James.Wilson@ColoState.Edu}
\thanks{This work was partially supported by NSF grant DMS-1620454 and by
the Hausdorff Institute for Mathematics.}
\begin{document}

\begin{abstract}
We introduce a decomposition of associative algebras into a tensor
product of cyclic modules. This produces a means to encode a basis
with logarithmic information and thus extends the reach of 
calculation with large algebras.  Our technique is an analogue to
the Schreier-Sims algorithm for permutation groups and is a by-product
of Frobenius reciprocity.
\end{abstract}
\maketitle

\section{Introduction}

In 1967 by Charles C. Sims \citelist{\cite{Sims:SS1}} introduced an 
algorithm that given a group $G$ generated by permutations $S$ 
on a finite set $\Omega$ produced a data structure that amongst 
other things could efficiently compute $|G|$, decide if an arbitrary
permutation $\sigma$ was in $G$, and if so write it as a word in 
the original generators.  The algorithm was put to immediate and effective
use in the Classification of Finite Simple Groups.
In the years to follow this algorithm would be improved
by several measures. Some improved worst-case complexity, others made faster
implementations for computer algebra systems such as GAP and Magma, and randomized 
nearly-linear time alternatives were created \citelist{\cite{GAP}\cite{Magma}\cite{Furst-Hopcroft-Luks}
\cite{Knuth}\cite{Jerrum}\cite{Seress}}.  Independently the concept of a base 
(one of the outputs of the algorithm) became a powerful device to 
explore subgroup lattices of large groups 
\citelist{\cite{Bertram-Herzog}\cite{Burness}\cite{Cameron-Solomon-Turull}}.
Today this family of techniques we collectively known as 
\emph{Schreier-Sims} algorithms.  

Here we introduce a Schreier-Sims type algorithm for computing bounds
on the dimension of large algebras, e.g. of group, Hecke, Hopf, and finitely
presented algebras. 

\subsection*{Notation}
We prefer here the notation $g\omega$ for the action of an element $g\in G$ on 
a term $\omega$ in a set $\Omega$ as this will accord well with our use of
left $A$-modules $M$.  We write $S_n$ for the group 
of permutations on $\Omega=\{1,\ldots,n\}$.
A group generated by a set $S$ is denoted $\langle S\rangle$.  

The free $K$-algebra 
on a set of indeterminants $X$ is denoted $K\langle X\rangle$ and consists of all 
$K$-linear combinations of words in $X$.  Also a $K$-algebra generated by a set $S$ 
is denoted $K\langle S\rangle$.   If the elements of $S$ are
known to commute we may also write $K[S]$, e.g. $K[X]$ denotes the usual polynomial
ring in $X$.  Note that in our notation a group algebra
is denoted $K\langle G\rangle$, not as $K[G]$. Let $\Ann_A(M)=\{a\in A\mid aM=0\}$.
Call $M$ \emph{faithful} if $\Ann_A(M)=0$.

\subsection*{An unfaithful use of Schreier-Sims}
Consider a case where an unfaithful representation 
$\rho:A\to \mathbb{M}_n(\mathbb{C})$ of a $\mathbb{C}$-algebra $A$ 
could be used to compute $\dim A$.
Assume first that $G$ is a group with a faithful permutation representation 
$\rho:G\to S_n$ into the symmetric group $S_n$ on $\{1,\ldots,n\}$.  
Take $A=\mathbb{C}\langle G\rangle$ to be the group algebra and
$\hat{\rho}:A\to \mathbb{M}_n(\mathbb{C})$ to be the linear extension
of $G$'s action to permute coordinates of  vectors.  Notice $\hat{\rho}$ 
is usually far from injective.  
E.g. if $n>3$,
\begin{align*}
	\dim \mathbb{C}\langle S_n\rangle=n!\gg n^2=\dim \mathbb{M}_n(\mathbb{C}).
\end{align*}
However, to compute $\dim A$ from these data, we simply apply the above
mentioned Schreier-Sims type algorithms to compute $|G|$ as a permutation group.

Our simple question is whether something like the above example can be
done without a priori knowledge of a group, or other baked-in structure.  
For instance, if all we know is that we have an algebra $A$ and an 
unfaithful $A$-module $M$, can we learn $\dim A$, 
or even a bound on $\dim A$?  On the one hand it seems $M$ is unlikely
to be helpful as it is not faithful.  Yet on the other hand the above
example shows that with the right representation theory perhaps $M$ 
is more informative than expected.  Replicating the success of Schreier-Sims
for general algebras turns up a happy coincidence.  Schreier-Sims is
a special case of Frobenius reciprocity.

Call an algebra $A$ \emph{semiprimary} if its Jacobson radical $J(A)$
is nilpotent and $A/J(A)$ is semisimple.

\begin{thm}\label{thm:main}
Fix a field $K$.  Given a semiprimary $K$-algebra $A$ and an $A$-module $M$, 
there exists $x_1,\ldots,x_{\ell}\in M$, a chain of subalgebras 
$A=A_0>A_1>\cdots >A_{\ell}$, where all irreducible representations 
of $A_{\ell}$ on $M$ are trivial, and a $K$-linear epimorphism
\begin{align*}
	A_0 x_1\otimes\cdots \otimes A_{\ell-1} x_{\ell}\otimes A_{\ell} \twoheadrightarrow A.
\end{align*}
Also given the means to operate in $A$ and the image of a generating set 
of $A$ acting on $M$, there is
a polynomial-time algorithm to construct the above data.
\end{thm}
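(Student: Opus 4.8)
The plan is to run Schreier--Sims one ``point'' at a time, with a point stabilizer replaced by a unital subalgebra and an orbit replaced by a cyclic submodule, and to isolate the Frobenius-reciprocity content as a single reduction step that we then iterate. The step takes a unital subalgebra $B\le A$ with $1_A\in B$, the restriction ${}_BM$, and a vector $x\in M$ with $Bx\not\subseteq Kx$, and it returns a proper unital subalgebra $B'<B$ together with a $K$-linear epimorphism $Bx\otimes_K B'\twoheadrightarrow B$. Applying the step with $B=A_i$ and recycling $B'$ as $A_{i+1}$ produces the chain $A=A_0>A_1>\cdots$; composing the resulting maps (tensoring over a field is exact) yields the claimed surjection onto $A$; and we halt at the first $A_\ell$ all of whose representations on $M$ are trivial --- which I will take to mean that every $1$-dimensional subspace of $M$ is $A_\ell$-stable.

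For the step I would put $B'=\{b\in B:bx\in Kx\}$. A short check shows $B'$ is a subspace closed under multiplication and containing $1_A$, hence a unital subalgebra, and $B'\ne B$ exactly because $Bx\not\subseteq Kx$. The map $B\to Bx$, $b\mapsto bx$, is a $K$-linear surjection with kernel $\Ann_B(x)\subseteq B'$, so $\Ann_B(x)$ has codimension $\dim_K Bx$ in $B$; since necessarily $x\ne0$ we have $1_A\notin\Ann_B(x)$, and I may pick a complement $V$ with $B=V\oplus\Ann_B(x)$ and $1_A\in V$. Then $\beta\mapsto\beta x$ restricts to an isomorphism $V\xrightarrow{\sim}Bx$; this $V$ is the \emph{transversal}. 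Define $\varphi\colon Bx\otimes_K B'\to B$ on simple tensors by $\varphi\bigl((\beta x)\otimes b'\bigr)=\beta b'$ with $\beta$ the unique representative in $V$; it is well defined and $K$-linear, and its image $V\cdot B'$ contains $V\cdot 1_A=V$ and $1_A\cdot B'=B'\supseteq\Ann_B(x)$, hence contains $V+\Ann_B(x)=B$, so $\varphi$ is onto. A tensor product rather than a direct sum is what we can afford because only $K$-linear surjectivity is asked --- there is room to spare --- and the one delicate point is that the transversal can be chosen to contain $1_A$, which is exactly what lets successive steps splice into a surjection onto $A$ itself.

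To iterate, set $A_0=A$; given $A_i$, if some $x\in M$ has $A_ix\not\subseteq Kx$, choose such an $x$ as $x_{i+1}$, let $A_{i+1}=\{a\in A_i:ax_{i+1}\in Kx_{i+1}\}$, and record $A_ix_{i+1}\otimes_K A_{i+1}\twoheadrightarrow A_i$; otherwise stop and set $\ell=i$. The chain $A_0>A_1>\cdots$ is strictly decreasing; applying $\rho\colon A\to\mathrm{End}_K(M)$ one sees $\rho(A_{i+1})=\{T\in\rho(A_i):Tx_{i+1}\in Kx_{i+1}\}$, a proper subspace of $\rho(A_i)$, so $\rho(A_0)\supsetneq\rho(A_1)\supsetneq\cdots$ has bounded length (immediate when $M$ is finite dimensional; in general this is where the semiprimary hypothesis is needed, along with the module theory of the $A_i$ acting on $M$). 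When we stop, no suitable $x$ remains, so $A_\ell$ stabilizes every line of $M$, forcing $A_\ell$ to act on $M$ by scalar multiplications; hence every irreducible $A_\ell$-constituent of $M$ is $1$-dimensional, i.e.\ trivial. Composing the recorded surjections gives
\[
A_0x_1\otimes\cdots\otimes A_{\ell-1}x_\ell\otimes A_\ell\;\twoheadrightarrow\;\cdots\;\twoheadrightarrow\;A_0x_1\otimes A_1\;\twoheadrightarrow\;A_0=A.
\]

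For the algorithm: from the generating set and its action on $M$, spin out a $K$-basis of $A$ and the matrices by which it acts on $M$ (polynomial in $\dim_K A$ and $\dim_K M$). Every stage is then linear algebra over $K$ in spaces of dimension at most $\dim_K A$ and $\dim_K M$: test whether each basis element of $A_i$ acts on $M$ by a scalar matrix (the stopping test); if not, a non-scalar action matrix moves some coordinate line, which supplies $x_{i+1}$; solve for $\Ann_{A_i}(x_{i+1})$ and the preimage of $Kx_{i+1}$ to get $A_{i+1}$; and build a complement $V$ of $\Ann_{A_i}(x_{i+1})$ in $A_i$ containing $1_A$ to read off $\varphi$. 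With at most $\dim_K A$ stages the total cost is polynomial. The part I expect to demand the most care is not this bookkeeping but the interface with the hypotheses: checking that ``$A_\ell$ stabilizes every $1$-dimensional subspace of $M$'' is the right reading of ``all irreducible representations of $A_\ell$ on $M$ are trivial,'' and invoking the semiprimary structure of $A$ to guarantee that the descending chain really terminates.
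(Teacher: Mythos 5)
Your existence argument is essentially correct, and it is a genuinely different route from the paper's: your $A_{i+1}=\{a\in A_i: ax_{i+1}\in Kx_{i+1}\}$ is exactly $K+\Ann_{A_i}(x_{i+1})$, the full line stabilizer, and once a complement $V$ of $\Ann_{A_i}(x_{i+1})$ with $1\in V$ is chosen, surjectivity of $(\beta x_{i+1})\otimes b\mapsto \beta b$ is immediate because the image contains both $V$ and $A_{i+1}\supseteq \Ann_{A_i}(x_{i+1})$; composing over a field then gives the stated epimorphism, and your stopping condition does force scalar action of $A_\ell$ on $M$. Note, however, that this is precisely the choice the paper explicitly declines (``we have not elected to use $B=A_x$''), because taking the full stabilizer makes the right-hand factors vastly larger and weakens the dimension bound that motivates the theorem; the paper instead takes $A_{i+1}=K\langle U\rangle$ for a set $U$ of Frobenius--Schreier--Sims generators, a typically much smaller subalgebra of $K+\Ann(x)$, and the nontrivial content is the Schreier-style rewriting of Proposition~\ref{prop:decomposition} showing that $Ax\otimes_K K\langle U\rangle\to A$ is still onto.

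The genuine gap is the algorithmic half of the theorem. Your algorithm begins by ``spinning out a $K$-basis of $A$'' and computes each $A_{i+1}$ by solving linear equations inside $A_i$, so its cost is polynomial in $\dim_K A$. But the input here is a black-box algebra given only by generators and their action on $M$, and the intended regime is $\dim_K A$ enormously larger than the input: the motivating example is $\mathbb{C}\langle S_n\rangle$, of dimension $n!$, acting on $\mathbb{C}^n$, and the whole point of the decomposition is to bound $\dim_K A$, which would be moot if a basis could be enumerated. Hence no step may list a basis of $A$ or of any $A_i$; one must instead manufacture a generating set for the next subalgebra directly from a generating set of the current one, using only the action on $M$. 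That is exactly what the FSS generators accomplish (the analogue of Schreier generators, of size roughly $|S|\cdot\dim_K A_ix_{i+1}$), and their construction --- the transversal $\tau$ and the section $\sigma$ into units, obtained via Jacobson density and lifting idempotents modulo the radical --- is where the semiprimary hypothesis actually enters (Proposition~\ref{prop:existence}). Your proposal invokes semiprimarity only vaguely for termination (where finite-dimensionality of $M$ already suffices for your chain) and never addresses this constructive bottleneck, so the polynomial-time claim, in the sense the theorem intends, is not established.
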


Amongst the implications of Theorem~\ref{thm:main} is a way to parameterize a spanning set
for $A$ by constructing bases $\mathcal{B}_i$ for each cyclic module $M_i=A_{i-1}x_i$, and
$A_{\ell}$.  The image of $\mathcal{B}_1\otimes\cdots\otimes \mathcal{B}_{\ell}\otimes A_{\ell}$
spans $A$ and provides a generating set for which each monomial term is bounded in length
buy $\ell+1$.  (Note that monomials in the original generators $S$ of $G$
are words of arbitrary length in $K\langle S\rangle$ and thus have numerous unspecified
relations.) Furthermore
we obtain a bound
\begin{align*}
	\dim A & \leq \dim M_1\cdots \dim M_{\ell} \cdot \dim A_{\ell}.
\end{align*}
Each $\dim M_i$ is known, the lingering ambiguity lies with $A_{\ell}$.  While we have
not discovered a property of $A$ and $M$ that will force $\dim A_{\ell}=1$, we have
found in examples it is quite common that $A_{\ell}=K$ or an algebra for which
the dimension is self-evident.

Continuing with our illustration.  Suppose our group $G$ represented on 
$\Omega=\{1,\ldots,n\}$ has a subset $\beta=\{\beta_1,\ldots,\beta_{\ell}\}$ such
that the following stabilizer subgroups
\begin{align}\label{eq:stab}
	G^{[i]} & = \{ g\in G \mid 1\leq j\leq i, \beta_j^g=\beta_j\}
\end{align}
end in $1$, i.e. $G^{[\ell]}=1$.  In the usual Schreier-Sims parlance, $\beta$ is
a \emph{base} for $G$. Use $A_i:=\mathbb{C}\langle G^{[i]}\rangle$
as subalgebras of $A=\mathbb{C}\langle G\rangle$.  Let $x_i=e_{\beta_i}$ be the
vector with zero's in all positions except $\beta_i$ and write $\Delta_i$ for
the $G^{[i-1]}$-orbit of $\beta_i$.  Then for each $i$:
\begin{align*}
	A_{i-1} x_i & = \mathrm{Span}_\mathbb{C}\left\{e_{\delta} \mid 
		\delta\in \Delta_i\right\}.
\end{align*}
Hence, the dimension of $A_{i-1}x_i$ is $|\Delta_i|$.  Finally note that
$A_{\ell+1}=\mathbb{C}$ in this case.  So we obtain a surjection
\begin{align*}
	\mathbb{C}^{\Delta_1}\otimes \cdots \otimes \mathbb{C}^{\Delta_{\ell}}\twoheadrightarrow 
		\mathbb{C}\langle G\rangle.
\end{align*}
Indeed, in this case of we get a bijection.  Observe that all the ingredients
now are in terms of algebras and modules and thus achieve our goal of removing
the permutation interpretation.  

\subsection*{Related work}
Ours is not the first attempt at a generalization of Schreier-Sims for linear
representations.  A notable related work by B\"a\"arnhielm \cite{Baarnhielm}
considered groups $G$ of matrices acting on $\mathbb{K}^n$ for finite fields $\mathbb{K}$
or order $q$. 
There the strategy was to induce from vectors in $\mathbb{K}^n$ a permutation
representation of $G$ and then proceed with the conventional Schreier-Sims 
methods.  The limitations of such an approach are that in general 
linear groups $G$ may fail to have any low-index subgroups.  For instance
the proper subgroups of $\mathrm{SL}_d(q)$ have index at least $q^{cd}$ for some $c$.

Also, while our interest is in an algorithm for so-called ``black-box algebra'',
where we know nothing of the algebra when we begin, it is worth specific mention
that many far superior algorithms for algebras exist when something is known 
about $A$.  Several authors have contributed over decades
to computer algebra systems including GAP \cite{GAP}
and Magma \cite{Magma}, and developed special purpose systems like CHEVIE \cite{CHEVIE}
that calculate with either arbitrary but small algebras, or large 
but prescriptive algebras. 
For instance,  an arbitrary algebra can be  provided by a basis and structure constants
or by a faithful representation.  Algorithms for such algebras where investigated
by Ronyai and later several others.  These are small in our sense because we can
provide them by a basis.  Large algebras in computation include group
algebra, quantum groups (Hopf algebras), Hecke algebras, and finitely presented
algebras.  Algorithms for these algebras can be efficient if they are first told
of additional features, such as the group of a group algebra or appropriate
Lie or Chevalley data.  See the above references for details.

\section{From Schreier-Sims to Frobenius}
In the coarsest explanation of our proof, we point out that
having functions from tensors of submodules into other modules is what one expects
when considering induction-restriction functors -- a natural tool in representation
theories both linear and permutation based.  In retrospect it seems
obvious that when working with stabilizer subgroups we could 
make arguments using induction-restriction with appeals to Frobenius reciprocity.
However, the original Schreier-Sims algorithm was so elegantly explained by
a rewriting formula known as Schreier's lemma that the Frobenius interpretation
never appeared.  Its linear analog therefore had not surfaced either.
We only discovered this relationship by a coincidence observing that the tedious
calculations one is loathed to write when proving Schreier's lemma and
Frobenius reciprocity turn up identical formulas.  This seems the right place 
to begin our proof.

Throughout the many improvements to the original Schreier-Sims algorithm
one key aspect survives intact which is the idea to build generators for
the stabilizer of a point by computing representatives of the cosets of
the stabilizer.  The reason this works, and the reason to attach Schreier's
name to the algorithm, is because of the following observation.

\begin{lemma}[Schreier]\label{lem:Schreier}
Given $G=\langle S\rangle$, $H\leq G$, and a function $\tau:G\to G$,
where $\tau(g)H=gH$, it follows that:
\begin{align*}
	H & = \langle \tau(stH)^{-1} st \mid s\in S, t\in \tau(G)\rangle.
\end{align*}
\end{lemma}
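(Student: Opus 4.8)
The plan is to prove the two inclusions separately. The inclusion $\langle \tau(stH)^{-1}st \mid s\in S,\ t\in\tau(G)\rangle \leq H$ is immediate: for any $g\in G$ the element $\tau(g)^{-1}g$ lies in $H$ because $\tau(g)H = gH$ means $g$ and $\tau(g)$ represent the same coset, so $\tau(g)^{-1}g \in H$; applying this with $g = st$ gives each generator in $H$, and $H$ is closed under the group operations. The real content is the reverse inclusion $H \leq \langle \tau(stH)^{-1}st \mid s\in S,\ t\in\tau(G)\rangle$.

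For the reverse inclusion I would first normalize the transversal. Without loss of generality assume $1 \in \tau(G)$ and $\tau(1)=1$ (replace $\tau$ by $g\mapsto \tau(g)\tau(1)^{-1}$ if necessary, or just observe the coset $H$ has some representative which we may take to be $1$); this is a harmless convenience. Now take $h\in H$. Since $G = \langle S\rangle$ and we may assume $S$ is symmetric (closed under inverses — or argue directly with both $s$ and $s^{-1}$), write $h = s_1 s_2 \cdots s_k$ with each $s_i\in S$. I would then "sift" this word from the left: set $t_0 = \tau(1) = 1$, and inductively define $t_i = \tau(s_i t_{i-1} H)$, so that $t_i$ is the chosen representative of the coset $s_i t_{i-1} H$. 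The key identity is
\begin{align*}
	s_i t_{i-1} = \bigl(\tau(s_i t_{i-1}H)^{-1} s_i t_{i-1}\bigr)^{-1}\cdot t_i,
\end{align*}
which rewrites $s_i t_{i-1}$ as (a generator of the claimed subgroup) times the new transversal element $t_i$. Here one must check $t_{i-1}\in\tau(G)$ at each stage so that $\tau(s_i t_{i-1}H)^{-1} s_i t_{i-1}$ is literally one of the listed generators — this holds since $t_{i-1}$ is by construction a value of $\tau$.

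Substituting this telescoping identity repeatedly, starting from $h = s_1\cdots s_k = s_1\cdots s_{k-1}(s_k t_0)$ with $t_0=1$ and working right to left, expresses $h$ as a product of generators from the claimed subgroup times a final leftover term $t_k \in \tau(G)$. Finally, $t_k = \tau(h H) = \tau(H) = 1$ because $h\in H$, so the leftover term disappears and $h$ is exhibited as a word in the Schreier generators. I expect the main obstacle to be purely bookkeeping: keeping the left-to-right sifting order consistent with the inverses in the generator $\tau(stH)^{-1}st$, and confirming at every step that the intermediate coset representatives $t_{i-1}$ genuinely lie in $\tau(G)$ so that each factor produced is exactly of the advertised form rather than merely conjugate to one. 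There is no deep idea beyond the telescoping rewrite; the care is entirely in the indices and the inverses.
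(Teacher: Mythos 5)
The paper itself offers no proof of this lemma (it simply cites Schreier and the classical rewriting argument), so your sketch is being measured against that standard proof, which is indeed the route you are following. The skeleton (sift a word through the transversal, telescope, and observe that the leftover coset representative is the one for $H$ itself) is right, but your key identity is wrong as displayed, and the two reductions you call harmless are exactly where the content of the lemma sits. Writing $u=\tau(s_i t_{i-1}H)^{-1}s_i t_{i-1}$, the correct rewriting is $s_i t_{i-1}=t_i\,u$, not $u^{-1}\cdot t_i$: indeed $u^{-1}t_i=t_{i-1}^{-1}s_i^{-1}t_i^{2}$, which is not $s_i t_{i-1}$ in general. With left cosets the new transversal element must come out on the left and the Schreier generators accumulate on the right, ending in $h=t_k\,u_k\cdots u_1$; your prose ("a generator times the new transversal element", "generators times a final leftover term") repeats the same transposition, so as written the telescoping does not telescope. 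This is fixable, but it is precisely the bookkeeping you flagged and then got backwards.

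More seriously, both of your "without loss of generality" moves change the generating set named in the statement, so neither is free. Replacing $\tau$ by $g\mapsto\tau(g)\tau(1)^{-1}$ replaces $\tau(G)$ by a translate and every listed generator by a conjugate, so proving the lemma for the normalized $\tau$ is not literally proving it for the given one; and replacing $S$ by $S\cup S^{-1}$ introduces factors $\tau(s^{-1}tH)^{-1}s^{-1}t$ that are not on the list. The latter is the one genuinely non-bookkeeping step of Schreier's lemma -- the statement is for arbitrary groups (e.g.\ free groups, the setting the paper mentions), where $h$ need not be a positive word in $S$ -- and the repair is the observation that with $t'=\tau(s^{-1}tH)$ one has $st'H=tH$, hence $\tau(st'H)=t$ and $\tau(s^{-1}tH)^{-1}s^{-1}t=\bigl(\tau(st'H)^{-1}st'\bigr)^{-1}$, an inverse of a listed generator. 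The base point can be handled similarly without touching $\tau$: if $r=\tau(H)\neq 1$, sift the word for $hr$ starting from $t_0=r\in\tau(G)$; the recursion gives $hr=r\,u_k\cdots u_1$, so $H\subseteq rKr^{-1}$ where $K$ is the claimed subgroup, and since $r\in H$ and $K\leq H$ this forces $K=H$. (Note also that you silently use that $\tau$ is constant on cosets -- implicit in the paper's notation $\tau(stH)$ -- both to define the $t_i$ and to conclude $t_k=\tau(H)$.) With these repairs your outline becomes a complete proof.
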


The set $\{\tau(stH)^{-1}st \mid s\in S, t\in \tau(G)\}$ is known a set
of \emph{Schreier generators} for $H$.
Schreier proved this lemma in the context of free groups and the proof
is an early example of rewriting in groups.  Its relevance to permutation
groups is as follows. 
Fix a group $G$ acting on a set $\Omega$.  Note that for $\omega\in G$, 
the function $G\to G\omega$ given by $g\mapsto g\omega$ is constant
on the cosets of the stabilizer
\begin{align*}
	G_{\omega} & = \{ g\in G \mid g\omega=\omega\}.
\end{align*}
Such a function $f$ is said to hide the subgroup $G_{\omega}$.  
Schreier's lemma says that to discover the hidden subgroup we need 
only enumerate the orbit 
\begin{align*}
	G\omega & = \{ g_1\omega,\cdots,g_m\omega\}
\end{align*}
So we define $\tau(g):=g_i$ where $g\omega=g_i\omega$ and apply Schreier's lemma to produce
a set of generators for $G_{\omega}$ of size $|S|\cdot |G\omega|$.
By recursion (together with a careful reduction of the number of generators
as we go) we end up with a the following data.
\begin{description}

\item[Base] a subset $\{\beta_1,\ldots,\beta_{\ell}\}$
of $\Omega$ whose stabilizer chain from \eqref{eq:stab} ends in $1$.

\item[Strong Generators] set $X$ of generators for $G$ with the property
\begin{align*}
	G^{[i]} & = \langle X_i\rangle & X_i:= X\cap G^{[i]}.
\end{align*}
This allow us to treat
the $G^{[i-1]}$ orbit $\Delta_i=\{ g\beta_i\mid g\in G^{[i]}\}$
as a connected Cayley graph 
$\mathrm{Cay}(X_i,\Delta_i) 
		 = \{ (\delta,x\delta) \mid \delta\in \Delta_i, x\in X_i\}$.

\item[Schreier tree] a spanning tree for $\mathrm{Cay}(X_i,\Delta_i)$.

\end{description}
These data are considered the output of the Schreier-Sims algorithm.
For a thurough account we refer the reader to \cite{Seress}.  Our own algorithm
will produce a similar output though regrettably it is far less understood
than the output of Schreier-Sims.

\subsection*{A coincidence with Frobenius reciprocity}
Now let us consider the effect of studying $G_{\omega}$ and $\Delta=\omega^G$
as problem of induction and restriction of permutation representations.

Consider an algebra $B$
contained in an algebra $A$.  To every $A$-module $M$ there is an associated
$B$-module $\Res^A_B(M)=\hom_B(B,M)$ which simply restricts the action 
of $A$ to $B$.  Likewise, every $B$-module $N$ induces an $A$-module
$\Ind^A_B(N)=A\otimes_B N$.  

\begin{thm}[Frobenius Reciprocity]
There is a natural isomorphism
\begin{align*}
	\hom_A(\Ind^A_B(N),M) \cong \hom_B(N,\Res^A_B(N)).
\end{align*}
\end{thm}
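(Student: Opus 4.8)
The plan is to recognize this as the tensor--hom adjunction and to exhibit the isomorphism by hand, since both sides are described concretely. Recall the conventions in force: $\Ind^A_B(N)=A\otimes_B N$ is an $A$-module via $a'\cdot(a\otimes n)=a'a\otimes n$, i.e.\ through left multiplication on the bimodule ${}_AA_B$, while $\Res^A_B(M)$ is $M$ itself with the $B$-action obtained by forgetting the rest of the $A$-action. I would define the forward map
\begin{align*}
  \Phi\colon\hom_A(A\otimes_B N,M)\longrightarrow\hom_B(N,\Res^A_B M),\qquad
  \Phi(\varphi)(n)=\varphi(1\otimes n),
\end{align*}
and the backward map
\begin{align*}
  \Psi\colon\hom_B(N,\Res^A_B M)\longrightarrow\hom_A(A\otimes_B N,M),\qquad
  \Psi(\psi)(a\otimes n)=a\,\psi(n).
\end{align*}

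First I would check that these land in the claimed hom-sets. For $\Phi(\varphi)$, $B$-linearity follows from $\Phi(\varphi)(bn)=\varphi(1\otimes bn)=\varphi(b\otimes n)=\varphi\bigl(b\cdot(1\otimes n)\bigr)=b\,\varphi(1\otimes n)$, using balancedness of $\otimes_B$ and $A$-linearity of $\varphi$. For $\Psi(\psi)$ the one genuine verification is that $(a,n)\mapsto a\,\psi(n)$ is $B$-balanced, so that it descends to $A\otimes_B N$: indeed $(ab)\psi(n)=a\bigl(b\,\psi(n)\bigr)=a\,\psi(bn)$ by $B$-linearity of $\psi$. That $\Psi(\psi)$ is then $A$-linear is immediate from the definition of the $A$-action on $A\otimes_B N$.

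Next I would verify $\Psi\circ\Phi=\mathrm{id}$ and $\Phi\circ\Psi=\mathrm{id}$: on pure tensors $\Psi(\Phi(\varphi))(a\otimes n)=a\,\varphi(1\otimes n)=\varphi(a\otimes n)$ by $A$-linearity of $\varphi$, and $\Phi(\Psi(\psi))(n)=\Psi(\psi)(1\otimes n)=1\cdot\psi(n)=\psi(n)$. Finally, for naturality: given a morphism $f\colon M\to M'$ of $A$-modules, postcomposition with $f$ on the left corresponds under $\Phi$ to postcomposition with $\Res^A_B(f)$ on the right, directly from the formula; and given a morphism $h\colon N'\to N$ of $B$-modules, precomposition with $\Ind^A_B(h)=\mathrm{id}_A\otimes h$ on the left corresponds to precomposition with $h$ on the right, since $\Phi(\varphi\circ(\mathrm{id}_A\otimes h))(n')=\varphi(1\otimes h(n'))=\Phi(\varphi)(h(n'))$. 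Assembling these checks gives the natural isomorphism (covariant in $M$, contravariant in $N$).

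I do not expect a real obstacle: this is precisely the statement that $\Ind^A_B$ is left adjoint to $\Res^A_B$, and each step is a short diagram chase. The only point that rewards attention is the bookkeeping of the two module structures on the bimodule ${}_AA_B$ — the left $A$-action is what makes $A\otimes_B N$ an $A$-module, and the right $B$-action is what the tensor is balanced against — since it is exactly this distinction that forces $\Phi(\varphi)$ to be $B$-linear rather than merely $K$-linear, and it is the place where a careless argument would go wrong.
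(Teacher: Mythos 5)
Your proof is correct and is exactly the standard tensor--hom adjunction argument that the paper takes for granted: the paper states the theorem without proof, and its later realization of the counit by $a\otimes\phi\mapsto a\cdot\phi(1)$ is precisely your map $\Psi$, so you are on the same track. You also rightly (if silently) read the right-hand side as $\hom_B(N,\Res^A_B(M))$, correcting the paper's typo $\Res^A_B(N)$.
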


As is standard with adjoint pairs we compose them to get endofunctor which
can be applied to modules in a single category.  The one we consider
is the composition $\Ind^A_B\circ \Res^A_B$.  The natural isomorphism
in Frobenius reciprocity then asserts a natural transformation from 
this endofunctor to the identity, a so-called \emph{counit} 
$\epsilon:\Ind^A_B\circ \Res^A_B\to 1$,
i.e.:
\begin{align}\label{eq:iso-1}
\Ind^A_B(\Res^A_B(M))=A\otimes_B \hom_B(B,M)\to M,
\end{align}
Usually it is beneficial to operate with such statements at the level of 
objects in the category, after all these are functors.
However, if one takes care to express this relationship specifically 
we discover our connection to Schreier-Sims.  First observe
that \eqref{eq:iso-1} is realized by the following map: 
for $a \in A$ and $\phi \in \hom_B(B,M)$,
\begin{equation*}
	 a \otimes \phi \mapsto a\cdot \phi(1).
\end{equation*}
Now consider this in the following special case. Fix a faithful 
representation $\rho:G\to S_n$.  Set $A=K\langle G\rangle$ and take $M=K^n$
to be the $A$-module induced by $\rho$.  Next define $B=K\langle H\rangle$, 
where $H=G_\omega$. Unpacking the formulas above we observe
that 
\begin{align*}
	\Ind^A_B(\Res^A_B(M)) & \cong K^{G/H}\otimes_K K^n
\end{align*}
and the action by $A=K\langle G\rangle$ is described as follows.  Fix
a transversal $\tau:G/H\to G$.  For $s\in S$, $tH\in G/H$, and
basis vector $e_i\in K^d$, 
\begin{align*}
	s(tH\otimes e_i) & = st H \otimes e_{\tau(stH)^{-1} st \cdot i} .
\end{align*}
Notice that this expression includes precisely the data in 
Schreier's Lemma~\ref{lem:Schreier}.  For example, if we consider 
induction-restriction of $G$-sets $\Omega$ we find the following
formula.  We need to choose a transversal (which will not alter
the result up to isomorphism) and set:
\begin{align*}
	\Ind^G_H(\Res^G_H( {_G\Omega})) & = {_G G/H}_H \times_{\tau} {_H \Omega}
\end{align*}
where
\begin{align*}
	s(tH,\delta) & = (stH, \tau(stH)^{-1}st \delta).
\end{align*}
We now see that 
the role of Schreier's Lemma in Sims' algorithm was to realize 
the Frobenius counit of the pair $(\Ind^G_H,\Res^G_H)$. 
Fortunately for us, Frobenius
reciprocity holds for much more than permutation modules.

\section{Proof of Theorem~\ref{thm:main}}

Our proof is devised in the following way.  We use a point $x\in M$ to
describe  cyclic module $Ax\leq M$.  This replace the concept of an orbit.
Next we devise subalgebras $B\leq A_x=K+\Ann_A(x)$ to replace the role of point
stabilizers in the original Schreier-Sims algorithm.  
We then want to apply the induction-restriction process to $Ax$ to obtain 
a surjection $A\otimes_{B} \hom_B(B,M)\to M$.  Recovering relations
to reduce the size of the vector space on the left to 
$Ax\otimes {_B M}$ we produce a process similar to Schreier's lemma 
and so create generators for $B$.
Hence, we can efficiently write down a surjection $Ax\otimes B\to A$.  
The final stage is to recursively apply the strategy to $B$.

Notice we have not elected to use $B=A_x$.  Doing so would create a
decomposition as well but one that is vastly larger than $A$.  For
example in the case of a group algebra $A=\mathbb{C}\langle G\rangle$,
the point stabilizer $H=G_{\omega}$ forms a subalgebra $B=\mathbb{C}\langle H\rangle$
of dimension $|H|$ where as the stabilizer subalgebra $A_{e_{\omega}}$ has
dimension $|G|-|H|= |H|(|G:H|-1)$.  Instead what we require of $B$ is
simply that we be able to produce a surjection of $Ax\otimes B\to A$.
In fact the ability to choose many algebras for $B$ has made it unclear
whether one can expect to build a surjection with is also a bijection
and thus obtain a precise dimension for $A$. This would be a helpful
question to resolve.

Our proof is split into three parts.  First we introduce a substitute
for the concept of \emph{Schreier generators} (Proposition~\ref{prop:decomposition}).  
We shall call these \emph{Frobenius-Schreier-Sims (FSS) generators} in part to 
acknowledge the critical role of each person and to distinguish our 
generators from those used in permutation group algorithms.

Second we prove that FFS generators exists for semi-local algebras (Proposition~\ref{prop:existence}).  
Along with that proof we discern a reasonable algorithm to construct FSS 
generators for an algebra (Section~\ref{subsect-repeating-decompositions}).
We use that to confirm  it is possible to compute a decomposition:
\begin{align*}
	A_0 x_1\otimes\cdots \otimes A_{\ell-1} x_{\ell}\otimes A_{\ell}\to A.
\end{align*}

\subsection{Frobenius-Schreier-Sims generators}

Fix a $K$-algebra $A=K\langle S\rangle$ generated by $S$ and a cyclic (left)
$A$-module $M=Ax$. By a \emph{transversal} for $M$ we mean $K$-linear map 
$\tau:M\to A$ such that $\tau(x)=1$ and 
\begin{align*}
	ax & = \tau(ax)x.
\end{align*}
Fix a basis $T$ for $\tau(M)$.  By a \emph{Frobenius-Schreier-Sims section}, relative to
$(S,T,\tau)$, we mean a function $\sigma$ on 
\begin{align*}
	ST=\{st|s\in S, t\in T, \tau(stx)\neq 0\}
\end{align*}
into $A^{\times}$ where
\begin{align*}
	\sigma(st)^{-1} - \tau(stx)\in \mathrm{Ann}_A(x).
\end{align*}
From these data we define the \emph{Frobenius-Schreier-Sims (FSS)} generators as
the following set.
\begin{align*}
	U(S,T,\sigma,\tau) = & \{1\}\\
		& \cup \{ \sigma(st)st : s\in ST\}\\
		 &\cup \{\sigma(st)^{-1}-\tau(stx) :s\in ST\}\\
		& \cup\{st: \tau(stx)=0\}.
\end{align*}

\begin{prop} \label{prop:decomposition}
Under the notation above, for each $a\in A$, there exists $\lambda_i\in K$, 
$t_i\in T$, and $u_{i1},\ldots,u_{i\ell(i)}\in U(S,T,\sigma,\tau)$ such that 
\begin{align*}
	a & = \sum_{i}\lambda_i t_i u_{i1}\cdots u_{i\ell(i)}.
\end{align*}
In particular, $K\langle U\rangle \leq K+\mathrm{Ann}(x)$ and
there is a linear epimorphism $Ax\otimes_K K\langle U\rangle \to A$
given by $ax\otimes b\mapsto \tau(ax)b$.  So each $s\in S$ is 
has tensor rank at most $2$, specifically
\begin{align*}
	sx\otimes \sigma(s)s + x\otimes (s-\tau(sx)\sigma(s)s)\mapsto s.
\end{align*}
\end{prop}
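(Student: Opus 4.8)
The plan is to establish the four assertions of the proposition in a convenient order, the combinatorial heart being a single rewriting identity modeled on Schreier's Lemma~\ref{lem:Schreier}.

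I would take the containment $K\langle U\rangle\le K+\Ann_A(x)$ first, as it is self-contained. One checks that each of the four families generating $U$ already lies in $K+\Ann_A(x)$: the element $1$ is a scalar; if $\tau(stx)=0$ then the transversal identity $(st)x=\tau(stx)x$ gives $(st)x=0$, so $st\in\Ann_A(x)$; the element $\sigma(st)^{-1}-\tau(stx)$ lies in $\Ann_A(x)$ by definition of an FSS section; and for $\sigma(st)\,st$ one combines $\sigma(st)^{-1}x=\tau(stx)x=(st)x$ (the first equality because $\sigma(st)^{-1}-\tau(stx)\in\Ann_A(x)$, the second by the transversal property) with left multiplication by $\sigma(st)$ to get $\sigma(st)(st)x=x$, so $\sigma(st)\,st-1\in\Ann_A(x)$. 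Since $\Ann_A(x)$ is a left ideal closed under products ($nn'x=n(n'x)=0$), the set $K+\Ann_A(x)$ is a subalgebra, hence contains $K\langle U\rangle$.

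The main step is to show that $V:=\mathrm{Span}_K\{\,t\,u_1\cdots u_m:t\in T,\ m\ge 0,\ u_j\in U\,\}$ is closed under left multiplication by $S$; this is the linear analogue of the fact that Schreier generators generate the hidden subgroup. Because $1=\tau(x)$ lies in $\tau(M)=\mathrm{Span}_K(T)$, it suffices to check $st\in V$ for $s\in S$ and $t\in T$. If $\tau(stx)=0$ then $st\in U$, and expanding $1$ over the basis $T$ exhibits $st=1\cdot(st)$ in $V$. If $\tau(stx)\ne 0$, set $u=\sigma(st)\,st\in U$ and $u'=\sigma(st)^{-1}-\tau(stx)\in U$ (compare the Schreier generators $\tau(stH)^{-1}st$ of Lemma~\ref{lem:Schreier}); then
\[
st=\sigma(st)^{-1}u=\bigl(u'+\tau(stx)\bigr)u=u'u+\tau(stx)\,u,
\]
and since $\tau(stx)$ and $1$ both lie in $\mathrm{Span}_K(T)$, each summand expands into $V$. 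An induction on word length starting from $1\in V$ then gives $A=K\langle S\rangle\subseteq V$, which is precisely the asserted decomposition $a=\sum_i\lambda_i t_i u_{i1}\cdots u_{i\ell(i)}$; the reverse inclusion is trivial.

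For the last two claims: the pairing $(m,b)\mapsto\tau(m)b$ on $Ax\times K\langle U\rangle$ is $K$-bilinear since $\tau$ is $K$-linear, so it induces a $K$-linear map $\Phi\colon Ax\otimes_K K\langle U\rangle\to A$, and $\Phi$ is onto by the decomposition above (write each $t_i$ as $\tau(m_i)$ with $m_i\in M$, legitimate as $T\subseteq\tau(M)$). The rank-$2$ witness for $s\in S$ is simply the $t=1$ instance of the rewriting identity: with $u=\sigma(s)s$ and $u'=\sigma(s)^{-1}-\tau(sx)$ one has $s-\tau(sx)\sigma(s)s=u'u\in K\langle U\rangle$, whence $\Phi\bigl(sx\otimes\sigma(s)s+x\otimes(s-\tau(sx)\sigma(s)s)\bigr)=\tau(sx)\sigma(s)s+\tau(x)u'u=s$. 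I expect the only real obstacle to be bookkeeping: guaranteeing that every rewritten term genuinely begins with an element of $T$ (handled by expanding $1$ and $\tau(stx)$ over $T$) and absorbing the degenerate case $\tau(stx)=0$ into the same pattern, together with the harmless convention $1\in T$ needed to make $\sigma(s)$ meaningful in the final formula.
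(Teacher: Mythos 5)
Your proof is correct and takes essentially the same route as the paper's: the same check that each of the four families of FSS generators lies in $K+\Ann_A(x)$, and the same Schreier-style rewriting identity $st=\tau(stx)\,\sigma(st)st+\bigl(\sigma(st)^{-1}-\tau(stx)\bigr)\sigma(st)st$, which the paper runs as an explicit induction peeling $S$-letters off each monomial while you package it as closure of the span of $T$-led $U$-words under left multiplication by $S$. The small points you make explicit (that $K+\Ann_A(x)$ is a subalgebra, that $\tau(tx)=t$ for $t\in T$, and the convention $1\in T$) are left implicit in the paper's argument.
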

\begin{proof}
Let $s\in S$ and $t\in T$. If $\tau(stx)=0$ then $st\in \mathrm{Ann}_A(x)$.   Likewise,
for $st\in ST$, by definition $\sigma(st)^{-1}-\tau(stx)\in \mathrm{Ann}_A(x)$.
Finally,  as $st x = \tau(st x)x=\tau(stx)x+\alpha(st)x=\sigma(st)^{-1} x$,
it follows that $\sigma(st)st x = x$.  Therefore $\sigma(st)st-1\in \mathrm{Ann}(x)$.
So in all cases $U\subset K+\mathrm{Ann}(x)$.

Assuming $a\in A=K\langle S\rangle$, it follows that there are $\alpha_i\in K$ and a 
sequence of sequences $s_{i1},\ldots,s_{i\ell(i)}\in S$ such that
\begin{align*}
	a & = \sum_i \alpha_i s_{i\ell(i)}\cdots s_{i1}.
\end{align*}
As $1\in T$ and $1\in U(S,T,\sigma,\tau)$, use $u_{i1}=t_{i}=1$ so that:
\begin{align*}
	a & = \sum_i \alpha_i s_{i\ell(i)}\cdots s_{i1}t_{i} u_{i1}.
\end{align*}
Now suppose for induction that for some $j$, there exists $\beta_j\in K$, $t_{i}\in T$, 
and $u_{i(j-1)},\ldots,u_{i1}\in U$ such that
\begin{align}\label{eq:induct}
	a & = \sum_i \beta_i s_{i\ell(i)}\cdots s_{ij}t_{i} u_{i(j-1)}\cdots u_{i1}.
\end{align}
Now we proceed to rewrite each summand.  If $\tau(s_{ij}t_i)=0$ then replace
that term with $t_j u_{ij}$ where $t_{i+1}=1$ and $u_{ij}=s_{ij}t_i\in U$.  
Otherwise, set 
$u_{i(j+1)}=\sigma(s_{ij}t_{i})s_{ij}t_{i}\in U(S,T,\sigma)$.  Note that 
$s_{ij}t_{i}=\tilde{\tau}(s_{ij}t_{i})\sigma(s_{ij}t_i)s_{ij}t_{i}$.  So also set 
$\sum_{k} \lambda_k t_{ik}=\tau(xs_{ij}t_{i})$.  By re-writing we prove:
\begin{align*}
	\beta_i s_{i\ell(i)}\cdots s_{ij}t_{i} u_{i(j-1)}\cdots u_{i1}
	 & = \beta_i s_{i\ell(i)}\cdots s_{i(j+1)}\tilde{\tau}(s_{ij}t_{i}x)
	 	\sigma(s_{ij}t_{i})s_{ij}t_{i} u_{i(j-1)}\cdots u_{i1}\\
	 & = \sum_k \beta_i\lambda_k s_{i\ell(i)}\cdots 
	 	s_{i(j+1)}t_{ik}u_{ij} u_{i(j-1)}\cdots u_{i1}\\
 & \qquad	 	+ \beta_i s_{i\ell(i)}\cdots s_{i(j+1)} 1\alpha(st)u_{ij} u_{i(j-1)}\cdots u_{i1}\\
\end{align*}
In particular, every summand now has been converted into a sum of possibly
several summands each with one fewer $S$ terms, followed by a $T$ term, and $U$ terms
(recalling that $\alpha(st)\in U$ as well).    Therefore re-indexing if necessary
\begin{align*}
	a & = \sum_{m} \gamma_m s_{m\ell(m)}\cdots s_{m(j+1)}t_{m}u_{mj} u_{m(j-1)}\cdots u_{m1}.
\end{align*}
Carrying out the recursion we arrive at
\begin{align*}
	a & = \sum_i \lambda_i t_i u_{i\ell(i)}\cdots u_{i1}.
\end{align*}

Finally let $\Gamma:M\otimes K\langle U\rangle\to A$ be defined on pure-tensors as 
$$m\otimes b \mapsto \tau(m)b.$$
Here we have used the assumption that $\tau$ is linear.
From the decomposition above, given $a\in A$, 
\begin{align*}
	\sum_i \lambda_i (t_i x)\otimes (u_{i\ell(i)}\cdots 
		u_{i1})\mapsto \sum_i \lambda_i t_i u_{i\ell(i)}\cdots u_{i1}=a.
\end{align*}
Therefore $\Gamma$ is surjective.
\end{proof}

\subsection{Existance of FSS generators}
To prove the existence of FSS generators we want to reduce to the case of
central simple rings, i.e. matrices $\mathbb{M}_n(\Delta)$ over division rings
$E$ extending $K$.  As we are afforded a module $M$ for $A$ it is a possible
to begin with a simple submodule.  However, as we cannot assume that $A$
is faithfully represented in on $M$ we need a device to lift our results
to $A$ no matter the presence of a nontrivial annihilator.  The tool we 
choose is to lift the Pierce decomposition by the lifting of idempotents.

So recall that in an associative algebra $A$, a set $e_1,\ldots,e_m$ of
elements in $A$ is a set of pairwise orthogonal idempotents if 
$e_i e_j=\delta_{ij} e_i$.  These idempotents are supplementary if
$1=e_1+\cdots +e_m$.  Idempotents other than $0$ and $1$ are called proper nontrivial.
An idempotent $e$ is primitive if it not the sum of proper nontrivial idempotents.
Finally by a \emph{frame} for $A$ we mean a set of pairwise orthogonal
primitive idempotents that sum to $1$.  For instance, in $\mathbb{M}_n(\Delta)$,
the usual matrix units $E_{ij}=[\delta_{ij}]$ give a natural frame:
$\{E_{11},\ldots, E_{nn}\}$.  Finally we need the following classic lemma
on the lifting of idempotents.

\begin{lemma}
Let $A$ be an algebra an $N$ a nilpotent ideal.  Suppose $e\in A$ such that
$e^2 \equiv e\mod{N}$ and $(e^2-e)^n=0$.  Define
\begin{align*}
	\hat{e} & = e^n\sum_{i=1}^{n-1} \binom{2n-1}{j}e^{n-j-1}(1-e)^j.
\end{align*}
It follows that $\hat{e}^2=\hat{e}$, $e\equiv \hat{e}\mod{N}$, and $\widehat{1-e}=1-\hat{e}$.
In particular in a semiprimary algebra we can lift a frame for $A/J(A)$.
\end{lemma}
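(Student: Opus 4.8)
The plan is to treat this as the classical idempotent‑lifting lemma, proved by a binomial trick, and then to bootstrap from a single idempotent to a full frame by induction through corner algebras.

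Write $t=e^{2}-e$. By hypothesis $t\in N$ and $t^{n}=0$, and $t$ commutes with $e$, so every expression below lies in the commutative subalgebra $K[e]$. The starting point is the identity
\begin{align*}
1=\bigl(e+(1-e)\bigr)^{2n-1}=\sum_{j=0}^{2n-1}\binom{2n-1}{j}e^{2n-1-j}(1-e)^{j}.
\end{align*}
Split this sum at the midpoint: let $\hat e$ collect the terms with $0\le j\le n-1$ and let $\hat f$ collect the terms with $n\le j\le 2n-1$, so that $\hat e+\hat f=1$. In the first block every term is divisible by $e^{2n-1-(n-1)}=e^{n}$; factoring it out recovers the stated formula $\hat e=e^{n}\sum_{j=0}^{n-1}\binom{2n-1}{j}e^{n-1-j}(1-e)^{j}$. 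Symmetrically, every term of $\hat f$ is divisible by $(1-e)^{n}$.

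First I would check idempotency. Since $\hat e\hat f=\hat e(1-\hat e)=\hat e-\hat e^{2}$, it suffices to see $\hat e\hat f=0$. Expanding the product, each summand is divisible by $e^{n}(1-e)^{n}$; as $e$ and $1-e$ commute this equals $\bigl(-(e^{2}-e)\bigr)^{n}=(-1)^{n}t^{n}=0$. Hence $\hat e^{2}=\hat e$. Next, $\hat e\equiv e\pmod N$: modulo $N$ the element $e$ is idempotent, so $e(1-e)\equiv 0$ and $e^{2n-1}\equiv e$; every term of $\hat e$ with $j\ge 1$ contains the factor $e(1-e)$ and so dies, leaving only $\binom{2n-1}{0}e^{2n-1}\equiv e$. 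Finally, $\widehat{1-e}=1-\hat e$: the defect of $1-e$ is $(1-e)^{2}-(1-e)=e^{2}-e=t$, so the same $n$ works, and running the construction for $1-e$ produces $\sum_{j=0}^{n-1}\binom{2n-1}{j}(1-e)^{2n-1-j}e^{j}$, which after the reindexing $j\mapsto 2n-1-j$ (using $\binom{2n-1}{2n-1-j}=\binom{2n-1}{j}$ and commutativity) is exactly $\hat f=1-\hat e$.

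For the frame statement I would induct on the size $m$ of a frame $\bar e_{1},\dots,\bar e_{m}$ of $A/J(A)$, using that $N=J(A)$ is nilpotent. Lift $\bar e_{1}$ to an idempotent $\hat e_{1}\in A$ by the formula. The corner algebra $e'Ae'$ with $e'=1-\hat e_{1}$ has nilpotent radical $e'J(A)e'$, and its semisimple quotient is the corresponding corner of $A/J(A)$, inside which the images of $\bar e_{2},\dots,\bar e_{m}$ (which are already annihilated on both sides by $\bar e_{1}$) form a frame of size $m-1$; apply the inductive hypothesis there and add $\hat e_{1}$ back in. Primitivity survives lifting because $\hat e_{i}A\hat e_{i}$ maps onto $\bar e_{i}(A/J(A))\bar e_{i}$ with nilpotent kernel, so a decomposition of $\hat e_{i}$ into proper orthogonal idempotents would descend to one of $\bar e_{i}$. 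The fussiest point is precisely this inductive passage—keeping the lifted idempotents pairwise orthogonal and verifying that the corner algebra really has the advertised semisimple quotient—rather than the binomial computations, which are elementary.
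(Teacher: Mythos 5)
Your proof is correct, and in fact the paper offers no proof at all for this lemma (it is cited as ``classic''), so there is nothing to diverge from: your binomial splitting of $1=(e+(1-e))^{2n-1}$ at the midpoint is the standard argument, and it recovers exactly the displayed formula for $\hat{e}$ (while silently correcting the paper's garbled summation index, which should read $\sum_{j=0}^{n-1}$). The corner-algebra induction for lifting a whole frame is also sound, the only points worth making explicit being the standard facts $J(e'Ae')=e'J(A)e'$ and $e'Ae'/e'J(A)e'\cong \bar{e}'(A/J(A))\bar{e}'$, plus the observation that a nonzero idempotent cannot lie in a nilpotent ideal, which is what makes both the orthogonality bookkeeping and the descent argument for primitivity work.
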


\begin{prop}\label{prop:existence}
If $A$ is semiprimary and $M$ is a simple $A$-module, then there exists
a transversal with a Frobenius-Schreier-Sims section.
\end{prop}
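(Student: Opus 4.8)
The goal is to exhibit, for a semiprimary $A$ with a simple module $M = Ax$, a linear transversal $\tau : M \to A$ with $\tau(x) = 1$ and $ax = \tau(ax)x$, together with an FSS section $\sigma$ on the set $ST$. The plan is to first produce $\tau$, then produce $\sigma$ by a unit-lifting argument. Since $M = Ax$ is cyclic, the map $A \to M$, $a \mapsto ax$, is a surjection of left $A$-modules with kernel $\mathrm{Ann}_A(x)$, so $M \cong A/\mathrm{Ann}_A(x)$ as vector spaces. Picking any vector-space complement $C$ of $\mathrm{Ann}_A(x)$ inside $A$ with $1 \in C$ (possible since $x \neq 0$, so $1 \notin \mathrm{Ann}_A(x)$) gives a linear section $\tau : M \to C \subseteq A$ of $a x \mapsto (\text{the } C\text{-component of } a)$; by construction $\tau$ is linear, $\tau(x) = 1$, and $\tau(ax)x = ax$. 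So a transversal always exists for purely linear-algebra reasons; the content is the \emph{section}.

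The section requires, for each $st \in ST$ (i.e. $\tau(stx) \neq 0$), a \emph{unit} $\sigma(st) \in A^{\times}$ with $\sigma(st)^{-1} - \tau(stx) \in \mathrm{Ann}_A(x)$; equivalently $\sigma(st)^{-1} x = \tau(stx)x = stx$, i.e. $\sigma(st)$ is a unit of $A$ whose inverse acts on $x$ the same way $st$ does. The obstacle is that $\tau(stx)$ itself need not be a unit — indeed in the group-algebra picture it is a single group element, hence a unit, but in general $M$ is unfaithful and $\tau(stx)$ is just some algebra element that happens to move $x$ correctly. The idea is to work modulo $J = J(A)$. Since $A$ is semiprimary, $\bar{A} = A/J$ is semisimple; and $M$, being simple, is annihilated by $J$ (as $JM$ is a submodule and $JM \neq M$ by Nakayama, since $J$ is nilpotent), so the $A$-action on $M$ factors through $\bar{A}$. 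Write $\bar{A} = \prod_k \mathbb{M}_{n_k}(\Delta_k)$; the simple module $M$ corresponds to one Wedderburn factor, say $\mathbb{M}_n(\Delta)$, on which it is the natural column module. Inside that factor, the element $\overline{stx}$-data is encoded by some matrix $P \in \mathbb{M}_n(\Delta)$ with $Pv = \bar{s}\bar{t}v$ where $v \in \Delta^n$ corresponds to $x$. The crucial point is that one can choose a transversal so that $P$ is always invertible in $\mathbb{M}_n(\Delta)$: because $M$ is simple, for any nonzero target vector $stx$ there is an invertible matrix carrying $v$ to it (extend $v$ and the target to bases — here one uses that a simple module over a matrix algebra is the whole column space, so single nonzero vectors extend to bases), and we build $\tau$ so that $\tau(stx)$, read in the Wedderburn factor, is such an invertible matrix; in the remaining factors we simply take $0$ (or $1$) freely since they do not affect the action on $x$. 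Then lift: choose a preimage of this invertible matrix under $A^{\times} \twoheadrightarrow \bar{A}^{\times}$ — the latter is surjective precisely because $J$ is nilpotent, so $1 + J \subseteq A^{\times}$ and units lift from $A/J$ (this is exactly the spirit of the idempotent-lifting lemma quoted above, applied to units rather than idempotents). Call a lift of $P^{-1}$ (assembled across factors) the element $\sigma(st)$; then $\sigma(st) \in A^{\times}$ and $\sigma(st)^{-1} \equiv P \pmod{J}$, so $\sigma(st)^{-1}$ and $\tau(stx)$ agree modulo $J$ in the relevant factor and agree on $x$, giving $\sigma(st)^{-1} - \tau(stx) \in \mathrm{Ann}_A(x)$ as required.

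The main obstacle, and the step I would dwell on, is arranging the transversal $\tau$ so that $\tau(stx)$ reads as an \emph{invertible} element in the Wedderburn factor acting faithfully on $M$, for every $st \in ST$ simultaneously. For a single vector this is the elementary fact that a nonzero vector in $\Delta^n$ extends to a $\Delta$-basis and hence is $Pv$ for some $P \in \mathrm{GL}_n(\Delta)$; the bookkeeping is to do this coherently as a \emph{linear} map $\tau$ on all of $M$ — but since we only need the defining equation $\tau(ax)x = ax$ and freedom to choose the complement $C$, one can first fix a basis of $M$ consisting of vectors $w_1 = x, w_2, \dots, w_d$, choose for each $w_j$ an element $c_j \in A$ that is a unit when projected to the faithful factor and satisfies $c_j x = w_j$ (for $w_1$ take $c_1 = 1$), and then \emph{define} $\tau$ to be the unique linear map sending $w_j \mapsto c_j$. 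This $\tau$ is a transversal, and for any $s, t$ the element $stx \in M$ is a linear combination $\sum_j \mu_j w_j$, whence $\tau(stx) = \sum_j \mu_j c_j$ — which one checks is again a unit in the faithful factor whenever $stx \neq 0$, because in that factor it acts as an invertible matrix on the whole simple module (it sends a basis to a spanning, hence basis, set). Once this is in place the section is produced factor-by-factor by unit lifting, and the proposition follows. I expect the only subtlety beyond this is verifying that "unit in the faithful Wedderburn factor" transfers correctly through the lifting map into a genuine unit of $A$ — handled by nilpotence of $J$ exactly as in the idempotent-lifting lemma.
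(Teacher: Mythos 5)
Your overall route---factor the action through $\bar A=A/J(A)$ (legitimate, since $J(A)M=0$ for the simple module $M$), identify $M$ with the column module of one Wedderburn factor $\mathbb{M}_n(\Delta)$ of $\bar A$, and obtain $\sigma(st)$ by lifting a unit of $\bar A$ through the nilpotent ideal $J(A)$---is sound, and it is genuinely different from the paper's proof, which instead lifts a whole frame of matrix units $E_{ij}$ to idempotents $e_{ij}\in A$ and writes $\tau$ and $\sigma$ by explicit formulas in the resulting Pierce decomposition ($\tau(ax)=\sum_i x_ie_{i1}$ and $\sigma(st)=1-e_{11}+\tau(stx)$, conjugating by a transposition when $x_1=0$). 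However, the step you yourself single out as the crux is wrong. You claim that, after choosing units $c_j$ of the faithful factor with $c_jx=w_j$ and extending $K$-linearly, $\tau(stx)=\sum_j\mu_jc_j$ is again a unit in that factor whenever $stx\neq0$, ``because it sends a basis to a spanning set.'' Only the image of $x$ is controlled by the choice of the $c_j$; the images of the other basis vectors are arbitrary. Concretely, in $\mathbb{M}_2(\mathbb{C})$ acting on $\mathbb{C}^2$ with $x=e_1$, $w_2=e_2$, $c_1=I$, $c_2=\left(\begin{smallmatrix}0&1\\1&0\end{smallmatrix}\right)$, the combination $c_1+c_2$ is the all-ones matrix, which is singular even though $w_1+w_2\neq0$. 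Worse, over $K=\Delta=\mathbb{C}$ with $n\geq2$ no $K$-linear $\tau$ can make every nonzero $\tau(m)$ invertible: $\det$ restricted to the subspace $\tau(M)$, of dimension at least $2$, is a nonconstant homogeneous form and so has a nontrivial zero.

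The saving grace is that this requirement is not part of the definition of an FSS section, which asks only that $\sigma(st)\in A^{\times}$ with $\sigma(st)^{-1}-\tau(stx)\in\Ann_A(x)$, i.e.\ that $\sigma(st)^{-1}$ and $\tau(stx)$ act the same way on $x$. So keep your arbitrary transversal (the complement construction is fine); for $st\in ST$ one has $stx\neq0$, so extend the vector $v$ corresponding to $x$ and the vector $stx$ to $\Delta$-bases to get $P\in\mathrm{GL}_n(\Delta)$ with $Pv=stx$; let $Q\in\bar A^{\times}$ have component $P^{-1}$ in the faithful factor and $1$ elsewhere, and let $\sigma(st)$ be any preimage of $Q$ in $A$, a unit because $1+J(A)\subseteq A^{\times}$. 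Then $\sigma(st)^{-1}x=Pv=stx=\tau(stx)x$, which is exactly the required condition. With the flawed ``make $\tau(stx)$ invertible'' step deleted and this observation put in its place, your argument is correct and arguably lighter than the paper's, at the cost of being less explicit: the paper's Pierce-decomposition formulas are what its algorithm actually implements. Note that the paper's own $\tau(stx)=\sum_i x_ie_{i1}$ is supported on a single column and is never invertible for $n\geq2$, which confirms that invertibility of $\tau(stx)$ was never the point.
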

\begin{proof}
Let $\rho:A\to \End(M)$ be the induced representation.  As $M$ is simple,
by Jacobson's density theorem the image of $A$ is dense, and in particular
$A/\ker\rho$ is primitive.  By assumption $A$ is semiprimary so $A/J(A)$ is
semisimple Artinian, and so $A/\ker\rho$ is simple.  Therefore we have an
epimorphism $A\to \mathbb{M}_n(\Delta)$ for some division ring $\Delta=\End_{A}(M)$.
By lifting the idempotents $E_{ij}$ to idempotents $e_{ij}\in A$ we can construct
explicit elements in $A$ whose image is a prescribed matrix, i.e.
$\sum_{ij} x_{ij}e_{ij}\mapsto [x_{ij}]$, where $x_{ij}\in e_{1}Ae_{1}$
(here we are using the assumption that the radical has finite length).

Now up to a choice of basis of $M$ as a $\Delta$-vector space, 
each $ax=\sum_{i=1}^n x_i e_i$.  Choose
\begin{align*}
	\tau(ax) & = \sum_{i=1}^n x_{i} e_{i1}.
\end{align*}

For $\sigma$ we proceed as follows.  Since we may assume $\tau(stx)\neq 0$, 
there is some $x_i\neq 0$.  If $x_1\neq 0$, choose
\begin{align*}
	\sigma(st) & = 1-e_{11} + \tau(stx).
\end{align*}
This is invertible with inverse
\begin{align*}
	\sigma(st) & = 1-e_{11} + -x_{11}^{-1}\tau(stx) + x_{11}^{-1} e_{11}.
\end{align*}
Furthermore, $(\sigma(st)^{-1}-\tau(stx))e_1=0$, which satisfies our desired
hypothesis.

If $x_1=0$ then let $x_i$ be the first non-zero.  Apply a permutation by $(1i)$ to
the rows and columns and apply the construction above, then conjugate back.  This
is the value for $\sigma(st)$.
\end{proof}

\subsection{Repeating decompositions}\label{subsect-repeating-decompositions} 
We should now like to consider a recursive application of the above decomposition. 
Evidently $B=K\langle U\rangle$ is a subalgebra so we can treat $M$ as a $B$-module.
Thus we can select a new $y\in M$ and proceed with $By$ in the role of $M$ and $B$ 
in the role of $A$.  The result would be to decompose
$$M_1\otimes\cdots\otimes M_{\ell}\otimes A_{\ell}\to A$$
of cyclic modules $M$.  The only complication is if $B=A$, as then we may find 
ourselves in an infinite corecursion.

So when do we get $A=B$?  Well observe that $B\leq K+\mathrm{Ann}_A(x)$ and so $A=B$
would imply $A=K+\mathrm{Ann}_A(x)$.  Thus we cease our corecursion when
$Ax$ is the trivial module $Kx$.  We are free to choose a cyclic submodule from $M$.
Which means we bottom out once $M$ itself is a product of trivial $A$-modules.
I.e. $M=K^n$ with the action by $A$ simply as scalars.

\subsection{An algorithm}
Now we discuss how to realize this decomposition.  We suppose we have an algebra
of black-box type $A=K\langle S\rangle$ and a representation $\rho:A\to \End(M)$.
What we mean by black-box in this context is that we have algorithms that perform
the operations of the algebras, and the module, and to test equality of elements.
It is typically the last assumption that cause some concern. For example operating 
in a quotient of $K[x_1,\ldots,x_n]$ requires testing when one polynomial is member 
in an ideal. That problem is known to be NP-hard and it is solved in general by 
often difficult Gr\"obner bases methods.  Fortunately once this has been done
the results can be recycled for every subsequent comparison of elements.  To if
this is necessary cost then at least it is a one time cost.

\begin{remark}
As a technical matter this input model is not yet usable in the decision problems
such in the study of P vs. NP since as stated we cannot prove the the operations
satisfy the axioms of an algebra.  Since our algorithms performance can only be
guaranteed under that assumption we should also demand that such algebras be input
along with proofs of the axioms.  That can be done but requires a form of computation
based on type theory and that is a subject for another context; see
\cite{Dietrich-Wilson:type}.  Even so, with our assumptions so far 
our algorithms below should be considered as black-box algorithms in 
the \emph{promise hierarchy}.
\end{remark}

We assume also several now standard algorithms for computing with small rings
and modules, that is ones for which a basis is small enough to produce.  
Detailed accounts of the many methods can be found in \citelist{\cite{HoltEO}\cite{vzG}}.

Here are the steps of our algorithm: given an algebra $A=K\langle S\rangle$ and
the images of $S$ in $\End(M)$, do as follows.

\begin{enumerate}
\item If $M$ is a trivial $A$-module return $A\to A$.

\item Otherwise, use the MeatAxe (or Ronayi's deterministic algorithm) to compute an basis
for an simple submodule $N\leq M$, and fix $0\neq x\in N$; so, $Ax=N$, and also
compute $\Delta=\End_A(N)$ producing a representation $A\to \mathbb{M}_n(\Delta)$.

\item Choose a set of supplementary pairwise orthogonal primitive idempotents $E_{11},\dots,E_{nn}$ 
for $\mathbb{M}_n(\Delta)$
and write them as polynomials in the image of $S$ in $\mathbb{M}_n(\Delta)$, for
instance by expanding $S$ into a basis of the image.

\item Apply the idempotent lifting formula to produce pairwise orthogonal 
primitive idempotents $e_1,\dots,e_n$ in $A$.  Add also $e_0:=1-\sum_i e_i$.

\item Implement the choise of $\tau$ and $\sigma$ above from the Pierce decomposition
given by the idempotents just calculated.

\item Compute the set $U$.  Repeat the process with $A_1:=K\langle U\rangle$.
 and return $Ax\otimes K\langle U\rangle$. Return $Ax_1\otimes\cdots \otimes A_{\ell}x_{\ell}\otimes A_{\ell+1}\to A$.

\end{enumerate}

\section{Examples}

In the master's thesis of the first author \cite{Kessler} an implementation of 
parts of our  generalized Schreier-Sims algorithm were developed.  A particular 
technological adaptation was to explore the algorithm in a parallel functional 
programming paradigm.  A full implementation of our algorithm has not been attempted
but the following examples are included as a demonstration.

\subsection{Example: the dihedral group $D_8$}
To give the reader a sense of how the FSS algorithm operates in a 
classical setting, we apply it to the toy example where $A$ is the 
group algebra of the dihedral group $D_{8}$ over the complex numbers, 
$\MB{C}\langle D_8 \rangle$. 

Recall that $D_8$ can be described by
\begin{equation*}
	D_8 = \{\,r,s \;|\; r^4 = s^2 = 1, \; srs = r^{-1}\}.
\end{equation*}
$D_8$ permutes the points of the square $\{(1,0),(0,1),(-1,0),(0,-1)\} \subset \mathbb{R}^2$ in the usual way. We can associate each of these points to a basis vector $\{e_1, e_2, e_3, e_4\}$ of $\MB{C}^4$. Denote this representation by $W$. Then a $2$-dimensional irreducible submodule of $V$ is generated by the element $e_1 - e_3$. Setting $v_1 = e_1 - e_3$ and $v_2 = e_2 - e_4$, the action of $D_8$ is defined by
\begin{equation} \label{eqn-dihedral-group-realization}
r \mapsto \begin{bmatrix}
    0     & -1  \\
    1       & 0   \\
\end{bmatrix},
\quad\quad
s \mapsto \begin{bmatrix}
    1      & 0  \\
    0       & -1  \\
\end{bmatrix}.
\end{equation}

We choose $\tau: V \rightarrow D_8$ to be given by
\begin{equation*}
\tau(v_1) = 1 \quad\quad \text{and} \quad\quad \tau(v_2) = r
\end{equation*}
and $T = \{1,r\}$. Then 
\begin{equation*}
ST = \{r, s, r^2, sr\}.
\end{equation*}
We observe that 
\begin{align*}
\tau(rv_1) & = r,\\
\tau(sv_1) & = 1,\\
\tau(r^2v_1) & = -1,\\
\tau(srv_1) & = -r.\\
\end{align*}
As the above elements are all invertible in $\MB{C}\langle D_8 \rangle$, we define $\sigma: ST \rightarrow \MB{C}\langle D_8 \rangle^\times$ so that $\sigma(st) = \tau(stv_1)^{-1}$:
\begin{align*}
\sigma(r) & = r^3,\\
\sigma(s) & = 1,\\
\sigma(r^2) & = -1,\\
\sigma(sr) & = -r^3.
\end{align*}
Then since 
\begin{equation*}
\{ \sigma(st)^{-1} - \tau(stv_1) \,|\, st \in ST\} = \{0\}
\end{equation*}
and 
\begin{equation*}
\{ st \,|\, \tau(st) = 0\} = \emptyset,
\end{equation*}
we have that
\begin{equation*}
U(S,T,\sigma,\tau) = \MB{C}\langle r^2,s \rangle \subseteq \MB{C} + \text{Ann}(v_1).
\end{equation*}
Of course $\MB{C}\langle r^2,s \rangle \cong C_2 \times C_2$ (the product of two order $2$ cyclic groups). Thus there is an epimorphism from 
\begin{equation*}
D_8v_1 \otimes \MB{C}\langle C_2 \times C_2 \rangle \rightarrow D_8.
\end{equation*}
But due to dimension considerations, this is in fact an isomorphism.

It can be checked that 
\begin{equation*}
W \cong L \oplus L' \oplus V
\end{equation*}
where $L$ and $L'$ are both 1-dimensional ($L$ is the trivial representation and $L'$ is the representation where $r$ acts as $-1$ and $s$ as $1$).  Thus as described in Section~\ref{subsect-repeating-decompositions} the algorithm stops at this point.

\subsection{Example: degenerate cyclotomic Hecke algebras}

As a demonstration of the generality of our method, we will apply it to a representation of a level three degenerate cyclotomic Hecke algebra $H_n^\lambda$. We begin by describing this algebra and justifying its general interest.

The {\emph{degenerate affine Hecke algebra}} $H_n$ is a generalization of the symmetric group $\Sy{n}$. For simplicity, in this example we will take $H_n$ to be an algebra over $\MB{C}$. $H_n$ is generated by elements $s_1, s_2, \dots s_{n-1}$ and $x_1, x_2, \dots, x_n$, such that $s_1, \dots, s_{n-1}$ satisfy the usual Coxeter generator relations for the symmetric group:
 \begin{align} \label{eqn-sym-grp-relations1}
& s_i^2 = 1,   & 1 \leq i \leq n-1,\\
 &s_is_j = s_js_i, & |i-j| > 1, \;\; 1 \leq i,j \leq n-1,\\
 & s_is_{i+1}s_i = s_{i+1}s_is_{i+1}, & 1 \leq i \leq n-2, \label{eqn-sym-grp-relations3}
 \end{align}
the elements $x_1, \dots, x_n$ commute, and:
\begin{align} \label{eqn-commuting}
& s_jx_i = x_is_j, & i \neq j, j+1,\\
& s_ix_i = x_{i+1}s_i - 1, & 1 \leq i \leq n-1.
\end{align}
As a $\MB{C}$-vector space 
\begin{equation*}
H_n \cong  \MB{C}\langle \Sy{n} \rangle \otimes_{\MB{C}} \MB{C}[x_1,\dots,x_n].
\end{equation*}
Choose some $d$-tuple $\lambda = (\lambda_1, \dots, \lambda_d) \in \MB{Z}^d$. Let $I^\lambda$ be the two-sided ideal of $H_n^\lambda$ generated by the element
\begin{equation} \label{eqn-cyclotomic}
\prod_{i = 1}^d (x_1 - \lambda_i).
\end{equation}
The quotient algebra $H^\lambda_n = H_n/I^\lambda$ is called the {\emph{degenerate cyclotomic Hecke algebra}} associated to $\lambda$. $H^\lambda_n$ is said to be of {\emph{level}} $d$. By abuse of notation we write $x_i, s_i \in H_n^\lambda$ for the images of $x_i, s_i \in H_n$ in this quotient. $H^\lambda_n$ has dimension $\dim(H_n^\lambda) = d^nn!$ \cite[Theorem 3.2.2]{K05}. In particular, as $\MB{C}$-vector spaces,
\begin{equation} \label{eqn-DAHA-decomposition}
H^\lambda_n \cong \MB{C}\langle \Sy{n} \rangle \otimes_{\MB{C}} \MB{C}[ x_1,x_2,x_3 ]
\end{equation}
with $\dim(\MB{C}[ x_1,x_2,x_3 ]) = d^n$.

The claim that degenerate cyclotomic Hecke algebras are generalizations of symmetric groups is justified by the fact that when $\lambda = (0)$, $H^\lambda_n \cong \MB{C}\langle \Sy{n} \rangle$. These algebras have deep connections to Lie theory, for example their representation theory is intimately connected to crystals for quantum groups \cite[Part I]{K05} and their centers are related to parabolic category $\mathcal{O}$ for $\mathfrak{gl}_n(\MB{C})$ \cite{Bru08}. Yet despite this, many aspects of their representation theory are still not fully understood. For example, when $d > 2$, $H_n^\lambda$ is generally not semisimple and the dimensions of simple $H_n^\lambda$-modules are not known. 

Consider a simple $6$-dimensional $H_3^\lambda$-module for $\lambda = (2,2,4)$, which we denote by $V_{2,2,4}$. Explicitly, this representation can be described by:\\
\vspace{2mm}
\begin{equation*}
s_1 \mapsto \begin{bmatrix}
    0      & 1 & 0 & 0 & 0 & 0  \\
    1       & 0 & 0 & 0 & 0 & 0  \\
    0       & 0 & 0 & 0 & 1 & 0  \\
      0       & 0 & 0 & 0 & 0 & 1  \\
        0       & 0 & 1 & 0 & 0 & 0  \\
          0       & 0 & 0 & 1 & 0 & 0  \\
\end{bmatrix},
\quad\quad
s_2 \mapsto \begin{bmatrix}
    0      & 0 & 1 & 0 & 0 & 0  \\
    0       & 0 & 0 & 1 & 0 & 0  \\
    1       & 0 & 0 & 0 & 0 & 0  \\
      0       & 1 & 0 & 0 & 0 & 0  \\
        0       & 0 & 0 & 0 & 0 & 1  \\
          0       & 0 & 0 & 0 & 1 & 0  \\
\end{bmatrix},
\end{equation*}
\vspace{3mm}
\begin{equation*}
x_1 \mapsto \begin{bmatrix}
    2      & -1 & 0 & 0 & 0 & -1  \\
    0       & 2 & 0 & 0 & -1 & 0  \\
    0       & 0 & 2 & -1 & -1 & 0  \\
      0       & 0 & 0 & 2 & 0 & -1  \\
        0       & 0 & 0 & 0 & 4 & 0  \\
          0       & 0 & 0 & 0 & 0 & 4  \\
\end{bmatrix},
\quad\quad
x_2 \mapsto \begin{bmatrix}
    2      & 1 & -1 & 0 & 0 & 0  \\
    0       & 2 & 0 & -1 & 0 & 0  \\
    0       & 0 & 4 & 0 & 1 & 0  \\
      0       & 0 & 0 & 4 & 0 & 1  \\
        0       & 0 & 0 & 0 & 2 & -1  \\
          0       & 0 & 0 & 0 & 0 & 2  \\
\end{bmatrix},
\end{equation*}
\vspace{3mm}
\begin{equation*}
x_3 \mapsto \begin{bmatrix}
    4      & 0 & 1 & 0 & 0 & 1  \\
    0       & 4 & 0 & 1 & 1 & 0  \\
    0       & 0 & 2 & 1 & 0 & 0  \\
      0       & 0 & 0 & 2 & 0 & 0  \\
        0       & 0 & 0 & 0 & 2 & 1  \\
          0       & 0 & 0 & 0 & 0 & 2  \\
\end{bmatrix}.
\end{equation*}

\vspace{3mm}

Denote the basis of $V_{2,2,4}$ chosen via the matrix representation above by: 
\begin{equation*}
\{v_1,v_2,v_3,v_4,v_5, v_6\}. 
\end{equation*}
Since $V_{2,2,4}$ is already assumed to be simple, any non-zero element of $V_{2,2,4}$ will generate $V_{2,2,4}$. We pick $V_{2,2,4} = H_3^\lambda v_1$. It can be checked that $\tau: V_{2,2,4} \rightarrow H_3^\lambda$ can be defined so that
\begin{align*}
&\tau(v_1) = 1,  &&\tau(v_2) = s_1,  \\
&\tau(v_3) = s_2,  &&\tau(v_4) = s_2s_1,  \\
&\tau(v_5) = s_1s_2,  &&\tau(v_6) = s_1s_2s_1.  \\
\end{align*}
Then $\tau(V_{2,2,4})$ is exactly the subalgebra generated by $s_1$ and $s_2$ which is isomorphic to the group algebra of $\Sy{3}$, $\MB{C}\langle \Sy{3}\rangle \subset H_3^\lambda$. Some care must be taken when choosing the elements of $T$. Our goal is that the element obtained after commuting $x_1$, $x_2$, or $x_3$ past $t \in T$ will be invertible in the algebra. In this case, the easiest choice is
\begin{equation*}
T = \{1,s_1,s_2,s_2s_1,s_1s_2,s_1s_2s_1\} \subset \MB{C}\langle \Sy{3} \rangle.
\end{equation*}

In order to define $\sigma: ST \rightarrow (H_3^\lambda)^\times$ we first observe that any element of $st \in ST$ will take one of two forms. Either $s = s_1, s_2$ in which case $st$ is an element of $\Sy{3}$ and we define $\sigma(st) = (st)^{-1}$. Otherwise $s = x_1, x_2, x_3$ and in this case we observe that since $t \in T \subset \MB{C}\langle \Sy{3} \rangle$, using relations \eqref{eqn-commuting} we can rewrite
\begin{equation*}
x_it = a_{t,i}x_j + b_{t,i}
\end{equation*}
where $a_{t,i}, b_{t,i} \in \MB{C} \langle \Sy{3} \rangle$ and $j \in \{1,2,3\}$. Then
\begin{equation*}
\tau(x_itv_1) = \tau(a_{t,i}x_jv_1 + b_{t,i}v_1) = k_j\tau(a_{t,i}v_1) + \tau(b_{t,i}v_1) = k_ja_{t,i} + b_{t,i}
\end{equation*}
where $k_1, k_2 = 2$ and $k_3 = 4$. Thus, provided that $k_ja_{t,i} + b_{t,i}$ is invertible, we can define
\begin{equation*}
\sigma(x_it) = (k_ja_{t,i} + b_{t,i})^{-1}.
\end{equation*}
For the choices of $T$ made above, $k_ja_{t,i} + b_{t,i}$ does happen to be invertible for all $i \in \{1,2,3\}$. 

For such $\tau$ and $\sigma$,
\begin{equation*}
\{ \sigma(st)^{-1} - \tau(stv_1) \;|\; s \in S, T \} = \{0\},
\end{equation*}
and
\begin{equation*}
\{ st : \tau(st) = 0\} = \emptyset.
\end{equation*}
On the other hand
\begin{equation*}
\{1,x_1,x_2,x_3\} \subset \{\sigma(st)st \;|\; s \in S, t\in T \}
\end{equation*}
as we would hope, but the span of $\{\sigma(st)st \;|\; s \in S, t\in T \}$ also contains additional elements that act by $0$ on $v_1$. For example, both $s_1x_1 - 2s_1$, and $s_1x_2 - 2s_1$ are in this set. 

Theorem \ref{thm:main} then says that there is a linear epimorphism 
\begin{equation} \label{eqn-dAHA-decom}
H_3^\lambda v_1 \otimes_{\MB{C}} \MB{C}\langle U \rangle \twoheadrightarrow H_3^\lambda.
\end{equation}
It is clear that $H_3^\lambda v_1 \cong \MB{C} \langle \Sy{3} \rangle$ as vector spaces and we suspect that \eqref{eqn-dAHA-decom} is an alternative decomposition to \eqref{eqn-DAHA-decomposition}. 

What aspects of the structure of $V_{2,2,4}$ does the FSS algorithm recognize? $V_{2,2,4}$ can be realized as follows: let $L(2) \boxtimes L(2) \boxtimes L(4)$ be the 1-dimensional representation of $\MB{C}[x_1,x_2,x_3] \subset H_3$ where $x_1$ and $x_2$ act by multiplication by $2$ and $x_3$ acts by multiplication by $4$. Since $\MB{C}[x_1,x_2,x_3]$ is a subalgebra of $H_3$, then we can construct an induced representation of $H_3$ from $L(2) \boxtimes L(2) \boxtimes L(4)$, 
\begin{equation*}
\ind^{H_3}_{\MB{C}[x_1,x_2,x_3]} L(2) \boxtimes L(2) \boxtimes L(4).
\end{equation*}
It turns out that this representation factors through the quotient $H_3 \xrightarrowdbl[]{p} H_3^\lambda$ and that
\begin{equation*}
V_{2,2,4} \cong p\Big(\ind^{H_3}_{\MB{C}[x_1,x_2,x_3]} L(2) \boxtimes L(2) \boxtimes L(4)\Big).
\end{equation*}
The FSS algorithm cannot see this quotient map, but it does seem to see shadows of it. $H_3^\lambda$ is free as a right $\MB{C}[ x_1,x_2,x_3 ]$-module, with basis $\Sy{3} \subset H^\lambda_3$. The FSS algorithm sees this via the first tensor term on the left hand side of \eqref{eqn-dAHA-decom}. The right tensor term of \eqref{eqn-dAHA-decom} does not appear to be equal to $\MB{C}[ x_1,x_2,x_3 ]$, but as suggested above, probably corresponds to an alternative basis decomposition of $H^\lambda_3$.

Within the theory of degenerate cyclotomic Hecke algebras and for many related generalizations of symmetric groups, the entire representation theory of the tower of algebras can be built up from induced representations from the appropriate subalgebra. In this example at least, the FSS algorithm was able to identify such structure.

\section{Closing remarks}
Our interest here has been in the discovery and proof 
of the decomposition of Theorem~\ref{thm:main}.  We have therefore not sought the most efficient
solution or the broadest applications.  We do however encourage such future development.
There are several questions left unanswered, we offer some we can see ourselves.

Our first and most pressing concern is to capture the behavior of the final
term $A_{\ell}$.  We have found often $A_{\ell}=K$ or context makes its value
immaterial.  But we should like to learn qualities of rings and modules that would
predict when $A_{\ell}=K$.

A second concern is that we do not appear to be able to ``sift'' most elements
in $A$ over the FSS generators.  What we mean by this is that in a traditional
Schreier-Sims algorithm and arbitrary permutation can be written as an product
of the strong generating set by having it act on the individual orbits and
removing the corresponding transversal term.  If at the end the element is nontrivial
we have proved that the permutation was not in the group.  Now if we apply the same
reasoning to our FSS generators we have a clear problem.  Elements $a$ of
$A$ may be in the annihilator of $M$ and so no amount of work acting with $a$
on the points $x_1,\dots,x_{\ell}\in M$ will tell us more than that $a$ annihilates $M$.
That being said, as we observed in the introduction for many elements of $A$ 
not in the annihilator this process allows us to write the element with short
monomials.

Finally, a different rewriting option.  If we have an element expressed as
a polynomial in the non-commuting generators $S$ of $A$ we can use the 
rewriting in our proof of Proposition~\ref{prop:decomposition}
to produce a new expression with fixed monomial lengths.  This step 
does not depend on the faithfulness of our modules. However, because 
we deal with linear combinations and not simply single terms as in 
Schreier's lemma, the number of terms in our some grows perhaps exponentially.
It would be nice to find a rewriting algorithm that can express a word
in FSS form in time polynomial in the final number of terms of the word.

\bibliographystyle{amsplain}
\begin{bibdiv}
\begin{biblist}
\bib{Baarnhielm}{article}{
	author={B\"a\"arnhielm, Henrik},
	title={The Schreier-Sims algorithm for matrix groups},
	note={https://arxiv.org/abs/math/0410593},
	date={2004},
}

\bib{Bertram-Herzog}{article}{
   author={Bertram, Edward A.},
   author={Herzog, Marcel},
   title={On medium-size subgroups and bases of finite groups},
   journal={J. Combin. Theory Ser. A},
   volume={57},
   date={1991},
   number={1},
   pages={1--14},
   issn={0097-3165},
   review={\MR{1101744}},
%   doi={10.1016/0097-3165(91)90002-X},
}

\bib{Bru08}{article}{
    AUTHOR = {Brundan, Jonathan},
     TITLE = {Centers of degenerate cyclotomic {H}ecke algebras and
              parabolic category {$\scr O$}},
   JOURNAL = {Represent. Theory},
  FJOURNAL = {Representation Theory. An Electronic Journal of the American
              Mathematical Society},
    VOLUME = {12},
      YEAR = {2008},
     PAGES = {236--259},
      ISSN = {1088-4165},
   MRCLASS = {20C08 (17B10)},
  MRNUMBER = {2424964},
%       DOI = {10.1090/S1088-4165-08-00333-6},
       URL = {https://doi.org/10.1090/S1088-4165-08-00333-6},
}
\bib{Magma}{article}{
 author = {Bosma, W.},
 author = {Cannon, J.},
 author = {Playoust, C.},
 title = {The Magma algebra system. I. The user language},
 journal = {J. Symbolic Comput.},
 volume = {24},
 number = {3-4},
 year = {1997},
 pages = {235--265},
 review = {\MR{1484478}}
}

\bib{Burness}{article}{
   author={Burness, Timothy C.},
   title={On base sizes for actions of finite classical groups},
   journal={J. Lond. Math. Soc. (2)},
   volume={75},
   date={2007},
   number={3},
   pages={545--562},
   issn={0024-6107},
   review={\MR{2352720}},
%   doi={10.1112/jlms/jdm033},
}
\bib{Cameron-Solomon-Turull}{article}{
   author={Cameron, Peter J.},
   author={Solomon, Ron},
   author={Turull, Alexandre},
   title={Chains of subgroups in symmetric groups},
   journal={J. Algebra},
   volume={127},
   date={1989},
   number={2},
   pages={340--352},
   issn={0021-8693},
   review={\MR{1028457}},
%   doi={10.1016/0021-8693(89)90256-1},
}
\bib{Dietrich-Wilson:type}{article}{
	author={Dietrich, Heiko},
	author={Wilson, James B.},
	title={Isomorphism testing of groups of most finite orders},
	year={2018},
	note={arXiv:1806.08872},
}
\bib{Furst-Hopcroft-Luks}{article}{
   author={Furst, Merrick},
   author={Hopcroft, John},
   author={Luks, Eugene},
   title={Polynomial-time algorithms for permutation groups},
   conference={
      title={21st Annual Symposium on Foundations of Computer Science},
      address={Syracuse, N.Y.},
      date={1980},
   },
   book={
      publisher={IEEE, New York},
   },
   date={1980},
   pages={36--41},
   review={\MR{596045}},
}

\bib{GAP}{book}{
    author = {The GAP~Group},
    title        = {GAP -- Groups, Algorithms, and Programming,
                    Version 4.10.0},
    year         = {2018},
    url          = {\verb+(https://www.gap-system.org)+},
    }

\bibitem{HoltEO}
D.\ F.\ Holt, B.\ Eick, and E.\ A.\ O'Brien.
Handbook of computational group theory. Discrete Mathematics and its Applications (Boca Raton). Chapman \& Hall/CRC, Boca Raton, FL, 2005.

%\bib{Goldstein-Guralnick}{article}{
%   author={Goldstein, Daniel},
%   author={Guralnick, Robert M.},
%   title={Alternating forms and self-adjoint operators},
%   journal={J. Algebra},
%   volume={308},
%   date={2007},
%   number={1},
%   pages={330--349},
%   issn={0021-8693},
%   review={\MR{2290925}},
%%   doi={10.1016/j.jalgebra.2006.06.009},
%}
\bib{Jerrum}{article}{ 
	author={Jerrum, Mark},
	title={A compact representation for permutation groups}, 
	journal={J. Algorithms}, 
	volume={7}, 
	date={1986}, 
	number={1}, 
	pages={60--78},
	issn={0196-6774}, 
	review={\MR{834082}}, 
% doi={10.1016/0196-6774(86)90038-6}, 
}

\bib{Kessler}{thesis}{
	Author={Ian Holmes Kessler},
	title={Functional programming applied to computation algebra},
	note={Master Thesis presented to Colorado State University, Department of Mathematics}
	year={2018},
}

\bib{K05}{book}{
    AUTHOR = {Kleshchev, Alexander},
     TITLE = {Linear and projective representations of symmetric groups},
    SERIES = {Cambridge Tracts in Mathematics},
    VOLUME = {163},
 PUBLISHER = {Cambridge University Press, Cambridge},
      YEAR = {2005},
     PAGES = {xiv+277},
}

\bib{Knuth}{article}{
   author={Knuth, Donald E.},
   title={Efficient representation of perm groups},
   journal={Combinatorica},
   volume={11},
   date={1991},
   number={1},
   pages={33--43},
   issn={0209-9683},
   review={\MR{1112272}},
%   doi={10.1007/BF01375471},
}
\bib{CHEVIE}{article}{
   author={Michel, Jean},
   title={The development version of the {\tt CHEVIE} package of {\tt GAP3}},
   journal={J. Algebra},
   volume={435},
   date={2015},
   pages={308--336},
   issn={0021-8693},
   review={\MR{3343221}},
 %  doi={10.1016/j.jalgebra.2015.03.031},
}

\bib{Seress}{article}{ 
	author={Seress, \'{A}kos}, 
	title={Nearly linear time algorithms for permutation groups: an interplay between theory and practice},
	note={Algebra and combinatorics: interactions and applications (K\"{o}nigstein, 1994)}, 
	journal={Acta Appl. Math.}, 
	volume={52}, 
	date={1998}, 
	number={1-3},
	pages={183--207}, 
	issn={0167-8019}, 
	review={\MR{1649697}}, 
% 	doi={10.1023/A:1005911125383}, 
} 
\bib{Seress:book}{book}{ 
	author={Seress, \'{A}kos}, 
	title={Permutation group algorithms}, 
	series={Cambridge Tracts in Mathematics}, 
	volume={152}, 
	publisher={Cambridge University Press, Cambridge},
	date={2003}, 
	pages={x+264}, 
	isbn={0-521-66103-X}, 
	review={\MR{1970241}}, 
}

\bib{Sims:SS1}{article}{
   author={Sims, Charles C.},
   title={Computational methods in the study of permutation groups},
   conference={
      title={Computational Problems in Abstract Algebra},
      address={Proc. Conf., Oxford},
      date={1967},
   },
   book={
      publisher={Pergamon, Oxford},
   },
   date={1970},
   pages={169--183},
   review={\MR{0257203}},
}

\bib{vzG}{book}{
   author={von zur Gathen, Joachim},
   author={Gerhard, J{\"u}rgen},
   title={Modern computer algebra},
   edition={2},
   publisher={Cambridge University Press, Cambridge},
   date={2003},
   pages={xiv+785},
   isbn={0-521-82646-2},
   review={\MR{2001757 (2004g:68202)}},
}

\end{biblist}
\end{bibdiv}
	
%\bibliography{Frob-Sch-Sims-Refs}

\end{document}